\DeclareMathOperator{\mex}{mex}
\DeclareMathOperator{\sg}{sg}
\DeclareMathOperator{\ord}{ord}
\renewenvironment{proof}[1][\proofname]{\par
  \pushQED{\qed}%
  \normalfont \topsep6\p@\@plus6\p@\relax
  \trivlist
  \item\relax
  {\itshape
  #1\@addpunct{.}}\hspace\labelsep\ignorespaces
}{%
  \popQED\endtrivlist\@endpefalse
}
\theoremstyle{definition}
\newtheorem{lemma}{Lemma}[section]
\newtheorem{theorem}[lemma]{Theorem}
\newtheorem{definition}[lemma]{Definition}
\newtheorem{example}[lemma]{Example}
\title{Sprague-Grundy Value for Common Divisor Nim}
\author{Ryotaro Nohagi\\
nry.kw.no93ann@gmail.com}
\date{}
\begin{document}

\maketitle
\begin{abstract}

We determine the Sprague-Grundy value for the Common Divisor Nim$_m$ (CDN$_m$ for short) for $m \in \mathbb{Z}_{\ge 1}$,  
which is called COMMON DIVISOR game in \cite[Chapter 2, Exercise 8 (p.55)]{Albert}.
\end{abstract}

\section{Introduction}
In this paper, we introduce a finite impartial game, called
Common Divisor
Nim$_{m}$ (CDN$_{m}$ for short) for $m \in \mathbb{Z}_{\ge 1}$. Let $\mathbb{N}$ be the set of all nonnegative integers. 
\begin{definition}\label{def3.1}
    Let $m\in\mathbb{Z}_{\ge 1}$. CDN$_m$ is defined as follows:
    \begin{itemize}
\item[($\mathrm{i}$)] CDN$_m$ is played by two players alternately.  
\item[($\mathrm{ii})$] The set of all positions of CDN$_m$ is $\mathbb{N}^m.$
\item[($\mathrm{iii})$]Let $P=(n_1,\dots,n_m)\in \mathbb{N}^m$ be a position of CDN$_m$. The set $N(P)$ of the next positions to $P$ is given by
  \begin{equation*}
N(P):= \left\{
(n'_{1},\dots,n'_{m}) \in \mathbb{N}^{m} \Biggm|
\begin{array}{lllll} \text{there is an integer } 1\leq i \leq m\\
\text{such that }
n'_j=n_j
 \text{ for }1\leq j\leq m\text{ with }j\neq i\text{, and}\\
n_i-n'_i\text{ is a (positive) common divisor of } n_1,\dots,n_m 
\end{array}\right\};
\end{equation*}
note that $\mathbf{0}:=(0,\dots,0)$ is a unique end position. 
\end{itemize}
\end{definition}

\begin{example}\label{ex3.2}In CDN$_3$, let $P:=(6,3,2)\in\mathbb{N}^3$. Then, $1$ is a unique common divisor of  $6,3$, and $2$. Thus, $N(P)=\{(6,3,1),(6,2,2),(5,3,2)\}$. Next, let $P':=(6,2,2)$. Then, $1$ and $2$ are the common divisors of $6$ and $2$. Thus, $N(P')=\{(5,2,2),(6,1,2), (6,2,1), (4,2,2), (6,0,2), (6,2,0)\}$.
\end{example}

\indent Observe that CDN$_{m}$ is a finite impartial game in the sense of \cite[Chapter 
 I, Definition 4.1 (c)]{siegel}. We know (see, e.g., \cite[Chapter I, Theorem 1.5]{siegel}) that 
 either of two players has a winning
strategy at each position of CDN$_{m}$.
In order to determine which player has a winning strategy at a
position, it suffices to check
if the Sprague-Grundy value of the poistion is equal to $0$ or not (see \cite[Theorem 7.12]{Albert}).
Here the Sprague-Grundy value $\sg(P)$ for $P \in
\mathbb{N}^{m}$ is
recursively defined as follows:
$$ \sg(P) := \mex \{ \sg(P') \mid P' \in N(P) \},$$
where for a finite subset $T$ of $\mathbb{N}$, we define
$\mex (T) := \min (\mathbb{N} \setminus T)$.
The aim of this paper is to determine the Sprague-Grundy value $\sg(P)$ for every position $P\in\mathbb{N}^m$ of  CDN$_{m}$, which refines \cite[Chapter $2$, Exercise $8$ (p.$55$)]{Albert}; 
here, we remark that ``Sprague-Grundy theorem'' (\cite[Chapter I\hspace{-1.2pt}V, Theorem 1.3]{siegel}) is not valid for this game. In \cite{Albert}, it is a problem to find winning strategy, but in this paper, we determine Sprague-Grundy value. Note that to find winning strategy is the corollary of to find Sprague-Grundy value.\\ 
\indent In order to explain our result, we introduce some notation. For $a\in\mathbb{N}$, we define 
\[\ord_2 (a) := 
\begin{cases}
    \max\{L\in\mathbb{N} \mid2^L \text{ is a divisor of } a \} & \text{if }a\neq0 , \\
    +\infty        & \text{if }a=0,
\end{cases}\]
    
\noindent where $+\infty$ is a formal element which is larger than any element in $\mathbb{N}$. For $P=(n_1,\dots,n_m)\in\mathbb{N}^m$, we set  \begin{align*}
\lambda(P) & := \min \bigl\{\ord_{2}(n_{i}) \mid 1 \le i \le m \bigr\}, \\
\iota(P) & := \# \bigl\{ 1 \le i \le m \mid \ord_{2}(n_{i}) = \lambda(P) \bigr\}.
\end{align*} 
\begin{theorem}\label{th1.2}
 For $P=(n_1,\dots,n_m)\in\mathbb{N}^m$, it holds that 
         \[\sg(P) =
\begin{cases}
\lambda(P)+1 & \text{if }P\neq\mathbf{0}\text{ and } \iota(P)\text{ is an odd number},\\
0 & \text{if }P\neq\mathbf{0}\text{ and } \iota(P)\text{ is an even number, or if }P=\mathbf{0}.\\
\end{cases}\] 
\end{theorem}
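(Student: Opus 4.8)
The plan is to argue by (Noetherian) induction on the sum $n_{1}+\cdots+n_{m}$, which every move of CDN$_m$ strictly decreases and which is bounded below; the base case $P=\mathbf 0$ is immediate, since $N(\mathbf 0)=\emptyset$ and $\sg(\mathbf 0)=\mex\emptyset=0$. Throughout I would use a handful of elementary facts. Writing $d:=\gcd(n_{1},\dots,n_{m})$ (with $\gcd(0,a)=a$), the positive common divisors of $n_{1},\dots,n_{m}$ are exactly the positive divisors of $d$, and $\ord_{2}(d)=\lambda(P)$ whenever $P\neq\mathbf 0$; moreover, for $a,b\in\mathbb N$ one has $\ord_{2}(a-b)=\min(\ord_{2}(a),\ord_{2}(b))$ when $\ord_{2}(a)\neq\ord_{2}(b)$, while $\ord_{2}(a-b)>\ord_{2}(a)$ when $\ord_{2}(a)=\ord_{2}(b)$.

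The technical core is a lemma tracking the effect of a single move on the pair $(\lambda,\iota)$. A move replaces some $n_{i}$ by $n_{i}-t$ with $t\mid d$; write $s:=\ord_{2}(t)\le\lambda(P)$, and write $\lambda',\iota'$ for the values at the resulting position $P'$. Call coordinate $i$ \emph{critical} if $\ord_{2}(n_{i})=\lambda(P)$ and \emph{non-critical} if $\ord_{2}(n_{i})>\lambda(P)$ (one of these always holds, as $\lambda(P)$ is the minimum). If $s<\lambda(P)$, then $\ord_{2}(n_{i}-t)=s$ in both cases, so the move creates a strict new minimum: $\lambda'=s$ and $\iota'=1$. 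If $s=\lambda(P)$ and $i$ is non-critical, then $\ord_{2}(n_{i}-t)=\lambda(P)$, so $\lambda'=\lambda(P)$ and $\iota'=\iota(P)+1$. If $s=\lambda(P)$ and $i$ is critical, then $\ord_{2}(n_{i}-t)>\lambda(P)$, so coordinate $i$ leaves the critical set: if $\iota(P)\ge 2$ then $\lambda'=\lambda(P)$ and $\iota'=\iota(P)-1$, whereas if $\iota(P)=1$ then $\lambda'>\lambda(P)$ (or $P'=\mathbf 0$), with $\lambda'$ and $\iota'$ otherwise undetermined by $(\lambda(P),\iota(P))$. I would also record that a move can reach $\mathbf 0$ only from a $P$ having exactly one nonzero coordinate, which forces $\iota(P)=1$. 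The case ``$\iota(P)$ even'' is then immediate: every move yields $\iota'\in\{1,\ \iota(P)+1,\ \iota(P)-1\}$, all odd, and no move reaches $\mathbf 0$, so by the induction hypothesis no option has Sprague-Grundy value $0$; hence $\sg(P)=0$.

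Suppose now $\iota(P)$ is odd; I must show (a) no option has value $\lambda(P)+1$, and (b) for each $k\in\{0,1,\dots,\lambda(P)\}$ some option has value $k$; together these give $\sg(P)=\lambda(P)+1$. For (a) I run through the lemma: a move with $s<\lambda(P)$ has value $s+1\le\lambda(P)<\lambda(P)+1$; a move with $s=\lambda(P)$ on a non-critical coordinate has $\iota'$ even, hence value $0$; a move with $s=\lambda(P)$ on a critical coordinate has either $\iota'=\iota(P)-1$ even (when $\iota(P)\ge 3$), hence value $0$, or $\lambda'>\lambda(P)$ (when $\iota(P)=1$), hence value $0$ or a value exceeding $\lambda(P)+1$; in every case the value is not $\lambda(P)+1$. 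For (b): when $1\le k\le\lambda(P)$, subtract $2^{k-1}$ (a positive divisor of $d$, since $2^{k-1}\mid 2^{\lambda(P)}\mid d$) from any nonzero coordinate; every nonzero coordinate has $\ord_{2}\ge\lambda(P)>k-1$, so this produces the strict new minimum $k-1$ with multiplicity one, i.e.\ a position of value $(k-1)+1=k$. When $k=0$: if $\iota(P)\ge 3$, subtract $2^{\lambda(P)}$ from a critical coordinate to reach $\iota'=\iota(P)-1$ (even); if $\iota(P)=1$ and $P$ has at least two nonzero coordinates, subtract $2^{\lambda(P)}$ from a nonzero non-critical coordinate to reach $\iota'=2$; if $\iota(P)=1$ and $P$ has exactly one nonzero coordinate, subtract that coordinate itself to reach $\mathbf 0$; in all cases the option has value $0$.

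I expect the main obstacle to be the subcase in which $\iota(P)=1$ and a critical coordinate is moved by a divisor of $2$-adic valuation $\lambda(P)$: there the resulting $\lambda$ and $\iota$ genuinely depend on finer arithmetic of the position, so the induction cannot evaluate $\sg$ of that option and can only certify that it is different from $\lambda(P)+1$. The delicate point is therefore to arrange (a) and (b) so that the proof of (b) never invokes such a move while (a) only needs the inequality; beyond that, what remains is routine divisibility and zero-coordinate bookkeeping to confirm that every move exhibited above is legal (i.e.\ subtracts a positive common divisor not exceeding the chosen coordinate).
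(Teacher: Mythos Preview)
Your proposal is correct and follows essentially the same approach as the paper: both prove the claimed formula satisfies the mex recursion by (implicitly or explicitly) inducting on position size, using a lemma that tracks how $\ord_{2}$ of the altered coordinate---and hence $(\lambda,\iota)$---changes under a move, and then exhibiting the same explicit options (subtract $2^{k-1}$ to hit value $k$, subtract $2^{\lambda(P)}$ from a suitably chosen coordinate or move to $\mathbf{0}$ to hit value $0$). Your organization of the move lemma into the three cases $s<\lambda(P)$, $s=\lambda(P)$ non-critical, $s=\lambda(P)$ critical is somewhat cleaner than the paper's eight-way split, but the content and the constructions are the same.
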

\section*{\large Acknowledgements}The author would like to thank Professor Daisuke Sagaki, who was his supervisor, for his helpful advice.

\section{Proof of Theorem \ref{th1.2}}
 In what follows, we fix $m\in\mathbb{Z}_{\geq 1}$. Also, keep the notation and seeing in Introduction. 
\begin{lemma}\label{lem3.6}
 Let $P=(n_1,\dots,n_m)\in\mathbb{N}^m$ with  $P\neq\mathbf{0}$, and 
\begin{align*}
    (n_1,\dots, n_{i-1}, n'_i, n_{i+1},\dots,n_m)\in N(P);
\end{align*} 
recall that $n_i-n'_i$ is a common divisor of $n_1,\dots,n_m$. 
Set $l_i:=\ord_2(n_i)\in\mathbb{N}$ and $k_i:=\ord_2(n_i-n'_i)\in\mathbb{N}$; note that $0\leq k_i\leq \lambda(P)\leq l_i$.

\begin{description}
    \item[($\mathrm{i})$] If $l_i> \lambda(P)$, then
\begin{subequations}
\begin{empheq}[left={\ord_2(n'_i) =\empheqlbrace}]{alignat=2}
&0 = \lambda(P) &\qquad \text{if }0=k_i=\lambda(P) ,\label{3.1a} \\
&0 < \lambda(P) &\qquad \text{if }0=k_i<\lambda(P),\label{3.1b}\\
&\lambda(P) &\qquad \text{if }0<k_i=\lambda(P),\label{3.1c}\\
& k_i < \lambda(P)&\qquad \text{if }0<k_i<\lambda(P)\label{3.1d}.
\end{empheq}
\end{subequations}
\item [($\mathrm{ii})$] If $l_i= \lambda(P)$, then
\begin{subequations}
\begin{empheq}[left={\ord_2(n'_i) =\empheqlbrace}]{alignat=2}
&\hspace{5mm}\text{a positive integer or} +\infty\hspace{5mm} > \lambda(P) &\qquad \text{if }0=k_i=\lambda(P),\label{3.2a} \\
&\hspace{21.5mm}0 \hspace{25mm}< \lambda(P) &\qquad \text{if }0=k_i<\lambda(P),\label{3.2b}\\
&\hspace{5mm}\text{a positive integer or} +\infty\hspace{5mm} > \lambda(P)&\qquad \text{if }0<k_i=\lambda(P),\label{3.2c}\\
&\hspace{22mm}k_i \hspace{23mm}< \lambda(P) &\qquad \text{if }0<k_i<\lambda(P)\label{3.2d}.
\end{empheq}
\end{subequations}
\end{description}
\end{lemma}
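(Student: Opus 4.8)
The plan is to prove Lemma~\ref{lem3.6} by a direct case analysis based on the $2$-adic valuation, using the elementary fact that for $a,b \in \mathbb{N}$, if $\ord_2(a) \neq \ord_2(b)$ then $\ord_2(a \pm b) = \min\{\ord_2(a),\ord_2(b)\}$, while if $\ord_2(a) = \ord_2(b)$ then $\ord_2(a \pm b) > \ord_2(a)$ (with the convention involving $+\infty$ when a term is $0$). Write $n_i' = n_i - (n_i - n_i')$, so $\ord_2(n_i')$ is controlled by comparing $l_i = \ord_2(n_i)$ with $k_i = \ord_2(n_i - n_i')$. The constraint $0 \le k_i \le \lambda(P) \le l_i$ recorded in the statement (which itself follows because $n_i - n_i'$ divides every $n_j$, hence divides some $n_j$ with $\ord_2(n_j) = \lambda(P)$, forcing $k_i \le \lambda(P)$) is what makes only the listed subcases possible.

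For part~(i), where $l_i > \lambda(P)$, I would split on the two independent binary conditions ``$k_i = 0$ or $k_i > 0$'' and ``$k_i = \lambda(P)$ or $k_i < \lambda(P)$'', giving the four subcases \eqref{3.1a}--\eqref{3.1d}. In the cases $0 = k_i < \lambda(P)$ and $0 < k_i < \lambda(P)$ we have $k_i < \lambda(P) < l_i$, so $k_i \neq l_i$ and the unequal-valuation rule gives $\ord_2(n_i') = \min\{l_i,k_i\} = k_i$; this yields \eqref{3.1b} (with $k_i=0$) and \eqref{3.1d} directly. In the cases $0 = k_i = \lambda(P)$ and $0 < k_i = \lambda(P)$ we still have $k_i = \lambda(P) < l_i$, so again $k_i \neq l_i$ and $\ord_2(n_i') = k_i = \lambda(P)$, which is \eqref{3.1a} respectively \eqref{3.1c}. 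One should also note $n_i' \neq 0$ here is not needed since the formulas only assert the value of $\ord_2(n_i')$, which equals $k_i$ and hence is finite.

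For part~(ii), where $l_i = \lambda(P)$, the same four subcases arise. When $k_i < \lambda(P) = l_i$ (the subcases \eqref{3.2b} and \eqref{3.2d}) we again have $k_i \neq l_i$, so $\ord_2(n_i') = \min\{l_i,k_i\} = k_i < \lambda(P)$, giving \eqref{3.2b} (with $k_i = 0$) and \eqref{3.2d}. When $k_i = \lambda(P) = l_i$ (the subcases \eqref{3.2a} and \eqref{3.2c}) the equal-valuation rule applies: if $n_i' = 0$ then $\ord_2(n_i') = +\infty > \lambda(P)$, and if $n_i' \neq 0$ then $\ord_2(n_i') > l_i = \lambda(P)$, so in either case $\ord_2(n_i')$ is a positive integer or $+\infty$ and strictly exceeds $\lambda(P)$; this is exactly \eqref{3.2a} and \eqref{3.2c}.

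I do not expect a genuine obstacle here — the lemma is a bookkeeping step isolating how a single legal move changes the relevant $2$-adic data. The only mild subtlety is handling the $+\infty$ convention cleanly in part~(ii), and making explicit that $k_i \le \lambda(P)$ always holds (so the displayed subcases are exhaustive); I would state the $\ord_2(a\pm b)$ facts as a one-line preliminary observation and then let the four-way case split run mechanically for each part.
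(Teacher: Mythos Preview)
Your proposal is correct and follows essentially the same approach as the paper. The paper writes out the explicit factorizations $n_i = 2^{l_i}N_i$ and $n_i - n'_i = 2^{k_i}N'_i$ with $N_i, N'_i$ odd, then computes $n'_i = 2^{k_i}(2^{l_i-k_i}N_i - N'_i)$ and reads off $\ord_2(n'_i)$ in each of the four subcases, splitting further on $l_i > \lambda(P)$ versus $l_i = \lambda(P)$; your use of the general rule for $\ord_2(a\pm b)$ is exactly this computation packaged as a one-line lemma, so the arguments coincide.
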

\begin{proof}
Write $n_i$ as
$n_i=2^{l_i}N_i$, where $N_i$ is an odd number. Also, write $n_i-n'_i$ as $n_i-n'_i=2^{k_i}N'_i$, where $N'_i$ is an odd number. Note that $n'_i=n_i-(n_i-n'_i)=2^{l_i}N_i-2^{k_i}N'_i=2^{k_i}(2^{l_i-k_i}N_i-N'_i)$.  
\begin{description}
    \item[(a)] If $0=k_i=\lambda(P)$, then
\begin{align*}
&\ord_2(n'_i)=\ord_2(2^0(2^{l_i}N_i-N'_i))\\
&=\begin{cases}
\ord_2(2^{l_i}N_i-N'_i)=0=\lambda(P) &\text{if }\lambda(P)< l_i;\text{ cf.\,\eqref{3.1a}},\\
\ord_2(N_i-N'_i)>0=\lambda(P) &\text{if }\lambda(P)=l_i;\text{ cf.\,\eqref{3.2a}}.
\end{cases}
\end{align*}
    \item[(b)] If $0=k_i<\lambda(P)$, then
\begin{align*}
&\ord_2(n'_i)=\ord_2(2^0(2^{l_i}N_i-N'_i))\\
&=\begin{cases}
\ord_2(2^{l_i}N_i-N'_i)=0<\lambda(P) &\text{if }\lambda(P)< l_i;\text{ cf.\,\eqref{3.1b}},\\
\ord_2(2^{\lambda(P)}N_i-N'_i)=0<\lambda(P) &\text{if }\lambda(P)=l_i;\text{ cf.\,\eqref{3.2b}}.
\end{cases}
\end{align*}
    \item[(c)] If $0<k_i=\lambda(P)$, then
\begin{align*}
&\ord_2(n'_i)=\ord_2(2^{\lambda(P)}(2^{l_i-\lambda(P)}N_i-N'_i))\\
&=\begin{cases}
\lambda(P) &\text{if }\lambda(P)< l_i;\text{ cf.\,\eqref{3.1c}},\\
\ord_2(2^{\lambda(P)}(2^{\lambda(P)-\lambda(P)}N_i-N'_i))\geq l{_i}+1>\lambda(P) &\text{if }\lambda(P)=l_i;\text{ cf.\,\eqref{3.2c}}.
\end{cases}
\end{align*}
    \item[(d)] If $0<k_i<\lambda(P)$, then
\begin{align*}
&\ord_2(n'_i)
=\ord_2(2^{k_i}(2^{l_i-k_i}N_i-N'_i))\\
&=\begin{cases}
k_i<\lambda(P) &\text{if }\lambda(P)< l_i;\text{ cf.\,\eqref{3.1d}},\\
\ord_2(2^{k_i}(2^{\lambda(P)-k_i}N_i-N'_i))=k_i<\lambda(P) &\text{if }\lambda(P)=l_i;\text{ cf.\,\eqref{3.2d}}.
\end{cases}
\end{align*}
\end{description}
Thus we have proved the lemma.
\end{proof}
Now, we prove Theorem \ref{th1.2}. For $P\in\mathbb{N}^m$, we set
     \[\varphi(P) :=
\begin{cases}
\lambda(P)+1 & \text
{if }P\neq\mathbf{0}\text{ and } \iota(P)\text{ is an odd number},\\
0 & \text{if }P\neq\mathbf{0}\text{ and } \iota(P)\text{ is an even number, or if } P=\mathbf{0}.\\
\end{cases}\]
We show that $\varphi(P)=\sg(P)$ for all $P\in\mathbb{N}^m$. If $P=\mathbf{0}$, then assertion is obvious. Assume that $P\neq\mathbf{0}$. By the definition of $\sg(P)$, it suffices to show the following claim (a) and (b):
\begin{itemize}
    \item[(a)] For any $P'\in N(P)$, it holds that $\varphi(P')\neq\varphi(P)$.
    \item[(b)] For any  integer $\alpha$ with  $0\leq \alpha<\varphi(P)$, there exists some $P'\in N(P)$ such that $\alpha=\varphi(P')$. 
\end{itemize}
First, we show  (a). Let  $P=(n_1,\dots,n_m)\in\mathbb{N}^m$ with  $P\neq\mathbf{0}$, and $P'=(n_1,\dots,n'_i,\dots,n_m)\in N(P)$ with $n'_i<n_i$.
\paragraph{Case A1.} Assume that $\iota(P)$ is an odd number; note that $\varphi(P)>0$.  If $\lambda(P)<\ord_2(n_i)$, then it follows from Lemma \ref{lem3.6} that \begin{subequations}
\begin{empheq}[left={\ord_2(n'_i) =\empheqlbrace}]{alignat=2}
&0 = \lambda(P) &\qquad \text{if }0=\ord_2(n_i-n'_i)=\lambda(P),\label{3.3a}\\
&0 < \lambda(P) &\qquad \text{if }0=\ord_2(n_i-n'_i)<\lambda(P),\label{3.3b}\\
&\lambda(P) &\qquad \text{if }0<\ord_2(n_i-n'_i)=\lambda(P),\label{3.3c}\\
&k_i < \lambda(P) &\qquad \text{if }0<\ord_2(n_i-n'_i)<\lambda(P).\label{3.3d}
\end{empheq}    
\end{subequations} 
If $\lambda(P)=\ord_2(n_{i}-n'_i)$, then $\ord_2(n_i)>\lambda(P)=\ord_2(n'_i)=\lambda(P')$ by \eqref{3.3a} and \eqref{3.3c}. Therefore, we get $\iota(P')=\iota(P)+1\in2\mathbb{Z}$. Thus, we have $\varphi(P')=0<1\leq\lambda(P)+1=\varphi(P)$. If $\ord_2(n_{i}-n'_i)<\lambda(P)$, then $\lambda(P)>\ord_2(n'_i)=\lambda(P')$ by \eqref{3.3b} and \eqref{3.3d}. Therefore we get $\iota(P')=1\in2\mathbb{Z}+1$. Thus, we have $\varphi(P')=\lambda(P')+1<\lambda(P)+1=\varphi(P)$.\\
\indent If $ \lambda(P)=\ord_2(n_i)$, then it follows from lemma \ref{lem3.6} that
\begin{subequations}
\begin{empheq}[left={\ord_2(n'_i) =\empheqlbrace}]{alignat=2}
&\hspace{5mm}\text{a positive integer or} +\infty\hspace{5mm} > \lambda(P) &\qquad \text{if }0=\ord_2(n_i-n'_i)=\lambda(P),\label{3.4a} \\
&\hspace{22mm}0 \hspace{25mm}< \lambda(P) &\qquad \text{if }0=\ord_2(n_i-n'_i)<\lambda(P),\label{3.4b}\\
&\hspace{5mm}\text{a positive integer or} +\infty\hspace{5mm} > \lambda(P)&\qquad \text{if }0<\ord_2(n_i-n'_i)=\lambda(P),\label{3.4c}\\
&\hspace{22mm}k_i \hspace{23mm} <\lambda(P) &\qquad \text{if }0<\ord_2(n_i-n'_i)<\lambda(P)\label{3.4d}.
\end{empheq}
\end{subequations}
If $\ord_2(n_i-n'_i)=\lambda(P)$, then
\begin{align}
     \ord_2(n'_i)>\lambda(P)=\ord_2(n_i)=\ord_2(n_i-n'_i)\label{3.7}
 \end{align} by \eqref{3.4a} and \eqref{3.4c}. If $P'=\mathbf{0}$ or $\iota(P')\in2\mathbb{Z}$, then $\varphi(P')=0<1\leq\varphi(P)$. If $\iota(P)\geq3$, then $\lambda(P)=\lambda(P')$ and  $\iota(P')=\iota(P)-1\in2\mathbb{Z}$. Thus, we obtain $\varphi(P')=0<1\leq\varphi(P)$. If $P'\neq\mathbf{0}$, $\iota(P')\in2\mathbb{Z}+1$, and $\iota(P)=1$, then $\lambda(P)<\lambda(P')$ by \eqref{3.7}. Thus, we have $\varphi(P')=\lambda(P')+1>\lambda(P)+1=\varphi(P)$.\\
 \indent If $\ord_2(n_i-n'_i)<\lambda(P)$, then $\lambda(P)>\ord_2(n'_i)=\lambda(P)$. Thus,  $\iota(P)=1$ and $\varphi(P')=\lambda(P')+1<\lambda(P)+1=\varphi(P)$.
 \paragraph{Case A2.}Assume that $\iota(P)$ is an even number. If $\lambda(P)<\ord_2(n_i)$, then we see by the same argument as above that $\ord_2(n'_i)$ is equal to one of \eqref{3.3a} -- \eqref{3.3d}. If $\lambda(P)=\ord_2(n_{i}-n'_i)$, then $\ord_2(n_i)>\lambda(P)=\ord_2(n'_i)=\lambda(P')$ by \eqref{3.3a} and \eqref{3.3c}. Thus, $\iota(P')=\iota(P)+1\in2\mathbb{Z}+1$ holds. Therefore, we obtain $\varphi(P')=\lambda(P')+1\geq1>0=\varphi(P)$. Otherwise, If $\ord_2(n_{i}-n'_i)<\lambda(P)$, then $\lambda(P)>\ord_2(n'_i)=\lambda(P')$ by \eqref{3.3b} and \eqref{3.3d}. Thus, $\lambda(P)>\ord_2(n'_i)=\lambda(P')$ holds. Therefore, we have $\iota(P')=1\in2\mathbb{Z}+1$ and $\varphi(P')=\lambda(P')+1\geq1>0=\varphi(P)$.\\
 \indent If $\lambda(P)=\ord_2(n_i)$, then we see by the same argument as above that $\ord_2(n'_i)$ is equal to one of \eqref{3.4a} -- \eqref{3.4d}. If $\ord_2(n_{i}-n'_i)=\lambda(P)$, then $\ord_2(n'_i)>\lambda(P)=\ord_2(n_i)=\ord_2(n_i-n'_i)$ by \eqref{3.4a} and \eqref{3.4c}. Since  $\lambda(P')=\lambda(P)$ by $\iota(P)\geq2$, it follows that $\iota(P')=\iota(P)-1\in2\mathbb{Z}+1$. Therefore, we get $\varphi(P')=\lambda(P')+1=\lambda(P)+1>0=\varphi(P)$. If $\ord_2(n_{i}-n'_i)<\lambda(P)$, then $\lambda(P)>\ord_2(n'_i)=\lambda(P')$ by \eqref{3.4b} and \eqref{3.4d}. Thus, we have $\iota(P)=1\in2\mathbb{Z}+1$. Therefore, we get $\varphi(P')=\lambda(P')+1>\lambda(P)+1=\varphi(P)$.\\
 \indent Thus we have shown (a).\\
 \indent Next, we show  (b). Let $P=(n_1,\dots,n_m)\in\mathbb{N}^m$. If $\varphi(P)=0$, then there is nothing to prove. Assume that  $\varphi(P)>0$, and let $0\leq\alpha<\varphi(P)$. Since $\varphi(P)>0$, it follows from the definition of $\varphi(P)$ that ($P\neq\mathbf{0}$, $\iota(P)\in2\mathbb{Z}+1$, and)  $\varphi(P)=\lambda(P)+1$. Hence we get $0\leq\alpha\leq \lambda(P)$. For $1\leq j\leq m$, if $n_{j}\neq0$, then we define $l_{j}\in\mathbb{N}$ and $N_{j}\in 2\mathbb{Z}+1$ by $n_{j}=2^{l_j}N_j$;  if $n_j=0$, we set $l_j=+\infty$, $N_j=0$. 
 \paragraph{Case B1.} Assume that $0<\alpha\leq \lambda(P)$. If we set $h:=2^{\alpha-1}$, then $h$ divides $n_j$ for every $1\leq j\leq m$. Let $1\leq s\leq m$ be such that $l_{s}=\ord_2(n_s)$ is equal to $\lambda(P)$. We have 
 \begin{align*}
    P':=(n_1,\dots, \underbrace{n_{s}-h}_{=:n'_s},\dots,n_m)\in N(P).
\end{align*}
We show that $\varphi(P')=\alpha$. Indeed, since $\lambda(P)-\alpha+1>0$, it follows that 
\begin{align*}
    \ord_2(n'_s)=\ord_2(2^{\lambda(P)}N_{s}-2^{\alpha-1})=\ord_2(2^{\alpha-1}(2^{\lambda(P)-\alpha+1}N_{s}-1))=\alpha-1<\lambda(P).
\end{align*}
Hence, we see that $\lambda(P')=\alpha-1<\lambda(P)$ and $\iota(P')=1$. Therefore we obtain  $\varphi(P')=\lambda(P')+1=\alpha$, as desired.
\paragraph{Case B2.} Assume that $\alpha = 0$. If $\# \{ 1 \le i \le m \mid n_{i} > 0\} = 1$, then $P':=\mathbf{0} \in N(P)$ satisfies $\varphi(P')=\varphi(\mathbf{0}) = 0 = \alpha$. Assume that $\# \{ 1 \le i \le m \mid n_{i} > 0\} \ge 2$. Since $\varphi(P) > 0$, it follows that $\iota(P)$ is an odd number. It suffices to show that $\iota(P')\in2\mathbb{Z}$ for some $P'\in N(P)$. Let $1\leq s\leq m$ be such that $l_s=\ord_2(n_s)$ is equal to $\lambda(P)$.
 If $\iota(P) \ge 3$, then we consider;
\begin{equation*}
P':=(n_{1},\dots, \underbrace{ n_{s}-2^{\lambda(P)} }_{=:n_{s}'},
\dots,n_{m}) \in N(P).
\end{equation*}
We see that
\begin{align*}
    \ord_{2}(n_{s}')=\ord_2(2^{\lambda(P)}N_{s}-2^{\lambda(P)})=\ord_2(2^{\lambda(P)}(N_{s}-1)) > \lambda(P) = \ord_{2}(n_{s}).
\end{align*}
Since $\iota(P)\geq3$,
we get $\lambda(P') = \lambda(P)$ and $\iota(P') = \iota(P) - 1 \in 2 \mathbb{Z}$, as desired.\\
\indent Assume that $\iota(P) = 1$. Let $1\leq s\leq m$ be as above, and set
\begin{align*}
&\mu(P):=\min \{ \ord_{2}(n_{i}) \mid 1 \le i \le m,  i \ne s \} > \lambda(P),\\
&\kappa(P):= \# \{ 1 \le i \le m \mid \ord_{2}(n_{i}) = \mu(P) \} \ge 1;
\end{align*} 
 note that $\mu(P) < + \infty$, because $\# \{ 1 \le i \le m \mid n_{i} > 0\} \ge 2$. Let $1 \le t \le m$ be such that $l_t=\ord_{2}(n_{t})$ is equal to $\mu(P)$. Consider 
 \begin{equation*}
P'=(n_{1},\dots, \underbrace{ n_{t}-2^{\lambda(P)} }_{=:n_{t}'},
\dots,n_{m}) \in N(P).
\end{equation*}
We see that 
\begin{align*}
    \ord_{2}(n_{t}') =\ord_2(2^{\mu(P)}N_{t}-2^{\lambda(P)})=\ord_2(2^{\lambda(P)}(2^{\mu(P)-\lambda(P)}M-1))= \lambda(P).
\end{align*} 
Therefore, we obtain $\lambda(P') = \lambda(P)$ and $\iota(P') = \iota(P) + 1 =2$, as desired.\\
\indent Thus, we have shown (b), thereby completing the proof of Theorem \ref{th1.2}.

\end{document}